\newtheorem{theorem}{Theorem}[]
\newtheorem{lemma}[theorem]{Lemma}
\newtheorem{definition}[theorem]{Definition}
\newtheorem{example}[theorem]{Example}
\newtheorem{remark}[theorem]{Remark}
\def\to{\rightarrow}
\def\f{\mathfrak}
\def\m{\mathbb}
\def\r{\mathrm}
\def\c{\mathcal}
\def\b{\mathbf}
\def\i{\iota}
\def\ot{\otimes}
\begin{document}
\setcounter{page}{1}
\title[Additive Equation on Quantum Semigroups]{Stability of Additive Functional Equation on\\
Discrete Quantum Semigroups}
\author[M. M. Sadr]{Maysam Maysami Sadr}
\address{Department of Mathematics\\
Institute for Advanced Studies in Basic Sciences\\
P.O.Box 45195-1159, Zanjan 45137-66731, Iran}
\email{sadr@iasbs.ac.ir}
\subjclass[2010]{Primary 81R15, Secondary 81R60, 39B42, 34D99}
\keywords{Discrete quantum semigroup, Additive functional equation, Hyers-Ulam stability, Noncommutative geometry}
\begin{abstract}
We show that noncommutative analog of additive functional equation has Hyers-Ulam stability on amenable discrete quantum
(semi)groups. This generalizes an old classical result.
\end{abstract}
\maketitle
\section{Introduction}
Let $G$ be a (semi)group. Consider the additive functional equation (\textbf{AFE}),
$$F(xy)=F(x)+F(y),$$
for functions $F$ from $G$ to the complex field $\m{C}$. \textbf{AFE} is said to have Hyers-Ulam stability (\textbf{HUS})
on $G$ if the following property holds.
\begin{quote}
\emph{Given $r>0$, there is $r'>0$ such that if a function $f$ on $G$ satisfies
$$|f(xy)-f(x)-f(y)|<r'$$
then there exists a function $F$ on $G$ satisfying
$$F(xy)=F(x)+F(y)\hspace{5mm}\text{and}\hspace{5mm}|F(x)-f(x)|<r.$$}
\end{quote}
The study of the above property goes back to a famous question of Ulam \cite{Ulam1} for characterization of pairs $(G,H)$,
where $H$ is a metric group, satisfying the above property with $\m{C}$ replaced by $H$. In 1941, Hyers \cite{Hyers1} showed that
if $G$ is the underlying additive group of a Banach space then  \textbf{AFE} has \textbf{HUS} on $G$.
Four decades later, Forti \cite{Forti1} extended
the result of Hyers for amenable semigroups by a very simple method. Since the appearance of \cite{Hyers1}, the Ulam stability
problem and its generalizations not only for \textbf{AFE} but also other types of functional equations has been considered and
developed by many mathematicians. (See \cite{Jung1} for the history of developments.) Nowadays, this area of mathematics is
generally named Hyers-Ulam stability.

The main goal of this note that we wish it would be the first one of a series of papers is an invitation to
stability theory of functional equations on noncommutative spaces. We start our program to study this subject by considering
the same traditional problem of Ulam for quantum groups instead of ordinary groups. Indeed, we extend the above mentioned result of
Forti \cite{Forti1} as follows. (For exact definitions of discrete quantum semigroups and amenability see Section 3.) Let $\m{G}$
be a discrete quantum semigroup with comultiplication $\Delta$. Denote by $\b{F}(\m{G})$ the function algebra on $\m{G}$
and by $\b{F}_\r{b}(\m{G})$ the von Neumann subalgebra of bounded functions on $\m{G}$. The 'sup-norm' of $\b{F}_\r{b}(\m{G})$ is denoted
by $\|\cdot\|$. The noncommutative analog of \textbf{AFE} becomes
$$\Delta(F)=1\ot F+F\ot1,$$
for functions $F$ in $\b{F}(\m{G})$. Similar the above mentioned stability property we make
\begin{definition}
We say that noncommutative \textbf{AFE} has \textbf{HUS} on $\m{G}$ if the following condition holds.
For every $r>0$ there is $r'>0$ such that if a function $f\in\b{F}(\m{G})$ satisfies the inequality
$$\|\Delta(f)-1\ot f-f\ot 1\|<r',$$
then there exists a function $F\in\b{F}(\m{G})$ for which
$$\Delta(F)=1\ot F+F\ot 1\hspace{10mm}\text{and}\hspace{10mm}\|F-f\|<r.$$
\end{definition}
The main result of this note is the following theorem that may be considered as an extension of \cite[Theorem 7]{Forti1}
to discrete quantum semigroups.
\begin{theorem}
If $\m{G}$ is a left or right amenable discrete quantum semigroup then noncommutative \textbf{AFE} has \textbf{HUS} on $\m{G}$.
\end{theorem}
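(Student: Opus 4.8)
Recall the classical proof: given $f$ with $\|f(xy)-f(x)-f(y)\| < r'$, one defines the "defect" $\varphi(x,y) = f(xy)-f(x)-f(y)$, observes it is small, and then uses an invariant mean $M$ on bounded functions on $G$ to average out the $y$-variable: set $F(x) = M_y[f(xy) - f(y)]$. One checks $F$ is well-defined (bounded, since $f(xy)-f(y) = f(x) + \varphi(x,y)$ is within $r'$ of the constant $f(x)$), that $\|F - f\| \le r'$, and that $F$ is additive using invariance of $M$ under right translation together with a cocycle-type identity for $\varphi$.

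**I would carry this out as follows.** First, fix $r>0$ and set $r' = r$ (the gain from averaging is automatic, so the same $r$ works). Suppose $f \in \b{F}(\m{G})$ satisfies $\|\Delta(f) - 1\ot f - f\ot 1\| < r'$. Put $\varphi = \Delta(f) - 1\ot f - f\ot 1 \in \b{F}_\r{b}(\m{G}\times\m{G})$ (the analog of the two-variable defect), so $\|\varphi\| < r'$. Next, using left (or right) amenability of $\m{G}$, let $m$ be a left-invariant mean, i.e. a state on $\b{F}_\r{b}(\m{G})$ invariant under the coaction given by $\Delta$. The candidate solution is the "partial mean" $F := (m \ot \r{id})\big(\Delta(f) - f\ot 1\big)$, the quantum translate of Forti's $M_y[f(xy)-f(y)]$; here $\Delta(f) - f\ot 1 = 1\ot f + \varphi$, which lies within $r'$ of $1\ot f$ in norm, so $(m\ot\r{id})$ applied to it is within $r'$ of $f$ — this gives $\|F - f\| \le r' = r$ and also shows $F \in \b{F}_\r{b}(\m{G}) \subseteq \b{F}(\m{G})$. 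Finally I must verify $\Delta(F) = 1\ot F + F\ot 1$: applying $\Delta$ and using coassociativity $(\Delta\ot\r{id})\Delta = (\r{id}\ot\Delta)\Delta$ to rewrite $(\r{id}\ot\Delta)(\Delta(f))$, together with the coassociative cocycle identity satisfied by $\varphi$ (namely $(\r{id}\ot\Delta)(\varphi) + 1\ot\varphi = (\Delta\ot\r{id})(\varphi) + \varphi\ot 1$, which is just the statement that the coboundary of a coboundary vanishes), and then slotting in the left-invariance of $m$ in the appropriate leg, the two sides collapse to the desired identity.

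**The main obstacle will be the bookkeeping of which leg of the coproduct the mean is averaging, and making the invariance identity do exactly the right thing.** In the classical case "right translation invariance" of $M$ is a one-line input; here I need to state precisely, in terms of $\Delta$, what it means for $m$ to be a left- (resp. right-) invariant mean on $\b{F}_\r{b}(\m{G})$, and then apply $(m\ot\r{id}\ot\r{id})$ to a three-fold tensor identity and see that invariance lets me replace $(m\ot\r{id})(\Delta(\,\cdot\,))$ by $m(\,\cdot\,)1$ in one of the terms. Since there are two choices (left vs. right amenability), I expect to run the argument once and note that the mirror-image argument — using $F := (\r{id}\ot m)(\Delta(f) - 1\ot f)$ and a right-invariant mean — covers the other case, appealing to the flip symmetry of the noncommutative \textbf{AFE}. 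A secondary subtlety is purely domain-theoretic: $f$ is a priori only in $\b{F}(\m{G})$, not bounded, but the hypothesis forces $\Delta(f) - 1\ot f - f\ot 1$ to be bounded, and it is this combination — not $f$ itself — to which the mean is applied, so no boundedness of $f$ is needed; I should remark on this explicitly so the reader sees the definition of $F$ makes sense.
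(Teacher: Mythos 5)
Your proposal follows essentially the same route as the paper's proof: Forti's averaging with the same candidate $F=(\f{m}\tilde\ot\i)(\Delta(f)-f\ot1)$, the same choice $r'=r$, the norm estimate from contractivity of the mean applied to the defect, and additivity from coassociativity plus invariance of the mean in the sliced leg (your cocycle identity for $\varphi$ is just a repackaging of the paper's direct computation, and your mirror-image remark corresponds to the paper's treatment of the other amenability).

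The one point where your reasoning as written does not hold up is the domain issue, which you dismiss with an incorrect justification. Since $f\in\b{F}(\m{G})$ may be unbounded, $\Delta(f)-f\ot1=1\ot f+\varphi$ is \emph{not} in $\b{F}_\r{b}(\m{G}\times\m{G})$; so with your definition of $F$ it is not true that ``the mean is applied only to the bounded combination'': the sliced mean is applied to an element that is bounded in the averaged leg only for each fixed value of the other index. The same happens throughout the additivity check, where $(\f{m}\tilde\ot\i\tilde\ot\i)$, $(\i\tilde\ot\Delta)$ and $(\Delta\tilde\ot\i)$ must act on three-variable elements of this leg-wise bounded type, and the invariance identity $(\f{m}\tilde\ot\i)\Delta(g)=\f{m}(g)1$, given only for $g\in\b{F}_\r{b}(\m{G})$, has to be extended to that setting. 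Supplying this machinery -- the spaces $\b{F}_{\r{b}:1}$, the $\tilde\ot$ slice maps, and the extensions in Lemmas 6 and 7 (invariance on leg-wise bounded elements, and $\Delta(\f{m}\tilde\ot\i)=(\f{m}\tilde\ot\i\tilde\ot\i)(\i\tilde\ot\Delta)$) -- is exactly the technical content of the paper that your sketch treats as automatic; it is routine because all fibers are matrix algebras, but it is a genuinely needed step, not a cosmetic remark. (You could instead define $F:=f+(\f{m}\tilde\ot\i)(\varphi)$ so that the mean is applied to a bounded element in the definition, but the verification of $\Delta(F)=1\ot F+F\ot1$ still requires the sliced maps on leg-wise bounded elements.) Finally, your claim that $F\in\b{F}_\r{b}(\m{G})$ is false in general -- only $F-f$ is bounded -- but this is harmless since the theorem only requires $F\in\b{F}(\m{G})$.
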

Our proof of Theorem 2 that will be given in Section 4 is the same proof of Forti \cite[Theorem 7]{Forti1} but translated to the dual
language of Hopf-algebras. As it would  be clear for the reader by taking a quick look at Forti's proof, for this
dualization we need to work with unbounded 'functions' of two variables which are bounded by fixing one of the variables.
Moreover, we must have a machinery to apply bounded operators to spaces of such functions. In Section 2
following some ideas from \cite{EffrosRuan1} we introduce somewhat a new way for tensoring linear maps which is specially 
designated to overcome the mentioned difficulties.
In Section 3 we consider definition of discrete quantum semigroups and amenability. Our definition is the same one of Van Daele
\cite{Daele2} for discrete quantum groups but with weaker conditions which result semigroups. Also these can be considered as
Hopf-von Neumann algebras of discrete type \cite{EnockSchwartz1} expect that their coproducts need not to be injective.
We end this section with three remarks.
\begin{remark}
\begin{enumerate}
\item[(i)] Suppose that $F\in\b{F}(\m{G})$ as above satisfies in noncommutative \textbf{AFE}.
Since $F$ as a function takes values in finite dimensional matrix algebras we may consider $E=\exp(F)$ as a member of $\b{F}(\m{G})$.
Then it is straightforward to check that $\Delta(E)=E\ot E$. Such elements in the language of Hopf-algebras are called group-like.
\item[(ii)] One can consider Definition 2 for any locally compact quantum group $\m{G}$ \cite{KustermansVaes1} where $f$ and $F$ are
unbounded affiliated operators to the underlying von Neumann algebra of $\m{G}$. But the proof of Theorem 2
does not longer work in this more general case.
\item[(iii)] Some works on stability of noncommutative analog of quadratic, Jensen and $n$-difference functional equations on quantum groups
and Kac algebras are in process.
\end{enumerate}
\end{remark}
\section{A type of tensor products}
Throughout $\i$ denotes the identity map and the C*-algebra of $n\times n$ matrices is denoted by $\c{M}_{n}$.
By an index system we mean a set $I$ together with a positive integer valued function $n_I$ on $I$. If $\gamma\in I$ then
for simplicity we write $\c{M}_\gamma$ for $\c{M}_{n_I(\gamma)}$. In the following $I,I',J,J'$
denote index systems. We denote by $\b{F}(I)$ the *-algebra of all functions
$f:I\to \cup_{\gamma\in I}\c{M}_\gamma$ for which $f(\gamma)\in\c{M}_\gamma$, with pointwise operations.
(In \cite{EffrosRuan1} $\b{F}(I)$ is called multimatrix algebra.) The *-subalgebra of all functions
with finite support is denoted by $\b{F}_\r{f}(I)$. So in the standard notation $\b{F}(I)=\prod_{\gamma\in I}\c{M}_\gamma$ and
$\b{F}_\r{f}(I)=\bigoplus_{\gamma\in I}\c{M}_\gamma$. It is also simply verified that $\b{F}(I)$ is identified with the multiplier algebra
of $\b{F}_\r{f}(I)$. We denote the unit of $\b{F}(I)$ by $1$ and hence $1(\gamma)=1_\gamma$ is the identity matrix in $\c{M}_\gamma$.
$\b{F}_\r{b}(I)$ is the *-subalgebra of $\b{F}(I)$ containing bounded functions i.e. those functions $f$ for which
$\|f\|=\sup_{\alpha\in I}\|f(\alpha)\|<\infty$. This is a C*-algebra with the $\sup$-norm and is the dual space of absolutely sumable functions.
So $\b{F}_\r{b}(I)$ is a von Neumann algebra. Let $I_i$ be an index system for $i=1,\ldots,k$.
We consider the cartesian product set $I_1\times\cdots\times I_k$ as an index system with
$n_{I_1\times\cdots\times I_k}(\alpha_1,\cdots,\alpha_k)=n_1(\alpha_1)\cdots n_k(\alpha_k)$.
Let $A=\{i_1,\cdots,i_l\}$ be a subset of $\{1,\ldots,k\}$. Then we let $\b{F}_{b:i_1\cdots i_l}(I_1\times\cdots\times I_k)$ be the subspace
of those functions $f$ in $\b{F}(I_1\times\cdots\times I_k)$ such that
for every fixed family $\{\alpha_i\in I_i\}_{i\in\{1,\ldots,k\}\setminus A}$, the condition
$\sup_{\alpha_i\in I_i,i\in A}\|f(\alpha_1,\cdots,\alpha_k)\|<\infty$ holds.

Suppose that $T$ is a linear map from $\b{F}(I )$ (resp. $\b{F}_\r{b}(I )$)
to $\b{F}(I')$. We define a linear map $T\tilde\ot\i$ from $\b{F}(I \times J)$ (resp. $\b{F}_{\r{b}:1}(I \times J)$) to $\b{F}(I'\times J)$
as follows. For $\beta\in J$ let $\i_\beta$ denote the identity linear map on $\c{M}_\beta$. Let $f$ be in $\b{F}(I \times J)$
(resp. $\b{F}_{\r{b}:1}(I \times J)$). Since $\c{M}_\beta$ is finite dimensional the function $\alpha \mapsto f(\alpha ,\beta)$ determines
a unique member of $\b{F}(I )\ot\c{M}_\beta$ (resp. $\b{F}_\r{b}(I )\ot\c{M}_\beta$). So $(T\ot\i_\beta)(\alpha \mapsto f(\alpha ,\beta))$
is in $\b{F}(I')\ot\c{M}_\beta$. Considering this latter space as a space of functions from $I'$ to
$\cup_{\alpha'\in I'}\c{M}_{\alpha'}\ot\c{M}_\beta$ we let
$[(T\tilde\ot\i)(f)](\alpha',\beta)=[(T\ot\i_\beta)(\alpha \mapsto f(\alpha ,\beta))](\alpha')$.
We may also write a more explicit formula for $T\tilde\ot\i$ as follows. Let $\{e_\beta^{ij}\}_{1\leq i,j\leq n_J(\beta)}$
be the standard vector basis for $\c{M}_\beta$. For $f$ as above let the elements $f_\beta^{ij}$ of $\b{F}(I )$ (resp. $\b{F}_\r{b}(I )$)
be such that $f(\alpha ,\beta)=\sum_{ij}f_\beta^{ij}(\alpha )\ot e_\beta^{ij}$. Then
\begin{equation}\label{E1}
[(T\tilde\ot\i)(f)](\alpha',\beta)=\sum_{ij}[T(f_\beta^{ij})](\alpha')\ot e_\beta^{ij}.
\end{equation}
We remark that if $T$ is a linear functional then the image of $T\tilde\ot\i$ canonically belongs to $\b{F}(J)$.
We may define similarly linear maps $\i\tilde\ot T$ and $\i\tilde\ot T\tilde\ot\i$. So, the latter is a map from
$\b{F}(J\times I\times J')$ (resp. $\b{F}_{\r{b}:2}(J\times I\times J')$) to $\b{F}(J\times I'\times J')$.
In below we list some properties of $\tilde\ot$ which are used in next sections.
\begin{enumerate}
\item[(\textbf{P0})] \emph{If $T$ is a linear map from $\b{F}(I)$ (resp. $\b{F}_\r{b}(I)$) to $\b{F}(I')$ then
$$(T\tilde\ot\i)(f\ot g)=T(f)\ot g,$$
where $f$ is in $\b{F}(I)$ (resp. $\b{F}_\r{b}(I)$) and $g\in\b{F}(J)$, and $f\ot g$ denotes the function
$(\alpha,\beta)\mapsto f(\alpha)\ot g(\beta)$.}
\end{enumerate}
Another trivial property of $\tilde\ot$ is associativity:
\begin{enumerate}
\item[(\textbf{P1})] \emph{$(\i\tilde\ot T)\tilde\ot\i=\i\tilde\ot T\tilde\ot\i=\i\tilde\ot (T\tilde\ot\i)$ and
$(T\tilde\ot\i)\tilde\ot\i=T\tilde\ot\i\tilde\ot\i=T\tilde\ot(\i\tilde\ot\i)$.}
\end{enumerate}
From (\ref{E1}) it follows easily that:
\begin{enumerate}
\item[(\textbf{P2})] \emph{If $T$ is a linear map from $\b{F}(I)$ or $\b{F}_\r{b}(I)$ to $\b{F}_\r{b}(I')$ then the image of $T\tilde\ot\i$
is contained in $\b{F}_{\r{b}:1}(I'\times J)$. The analogous statements are satisfied for  $\i\tilde\ot T$ and $\i\tilde\ot T\tilde\ot\i$.}
\end{enumerate}
For $\alpha\in I$ and $\beta\in J$ let $\f{P}_{\beta}:\b{F}(J)\to\c{M}_{\beta}$ and $\f{I}_{\alpha}:\c{M}_{\alpha}\to \b{F}(I)$
denote canonical linear projection and imbedding respectively. For every linear map $T:\b{F}(I)\to\b{F}(J)$ we let
$T^\alpha_\beta=\f{P}_{\beta}T\f{I}_{\alpha}$. Now, suppose that $T$ is a *-homomorphism from $\b{F}(I)$
to $\b{F}(J)$. Since the kernel of $\f{P}_{\beta}T$ is a two-sided ideal with finite codimension in $\b{F}(I)$,
and since matrix algebras have no nontrivial two-sided ideals, there is a finite subset $I_0$ of $I$ with $\f{P}_{\beta}T|_{\b{F}(I\setminus I_0)}=0$.
It follows that for every fixed $\beta\in J$ there are only finitely many $\alpha$ in $I$ with $T^\alpha_\beta\neq0$ and
$[T(f)](\beta)=\sum_\alpha T^\alpha_\beta(f(\alpha))$. Analogous statements are completely satisfied when the domain of $T$
is the subalgebra $\b{F}_\r{b}(I)$.
\begin{enumerate}
\item[(\textbf{P3})] \emph{If $T$ is a *-homomorphism from $\b{F}(I)$ or $\b{F}_\r{b}(I)$ to $\b{F}(J)$ then
$$[(T\tilde\ot\i)(f)](\beta,\beta')=\sum_{\alpha}(T^{\alpha}_{\beta}\ot\i_{\beta'})(f(\alpha,\beta'))\hspace{5mm}(\beta'\in J').$$
The analogous statements are satisfied for  $\i\tilde\ot T$ and $\i\tilde\ot T\tilde\ot\i$.}
\end{enumerate}
\begin{enumerate}
\item[(\textbf{P4})] \emph{Let $T:\b{F}(I)\to\b{F}(J)$ and $T':\b{F}(I')\to\b{F}(J')$ be linear maps such that either $T$ or $T'$ is
*-homomorphism. Then
$$(\i\tilde\ot T')(T\tilde\ot\i)=(T\tilde\ot\i)(\i\tilde\ot T')$$
as linear maps from $\b{F}(I\times I')$ to $\b{F}(J\times J')$.}
\end{enumerate}
\begin{proof}
We suppose that $T$ is *-homomorphism. The other case is similar. Let $f$ be in $\b{F}(I\times I')$ and let
$f^{ij}_\alpha\in\b{F}(I')$ ($1\leq i,j\leq n_I(\alpha)$) be such that $f(\alpha,\alpha')=\sum_{ij}e^{ij}_\alpha\ot f^{ij}_\alpha(\alpha')$.
By (\textbf{P3}),
$[(T\tilde\ot\i)(f)](\beta,\alpha')
=\sum_\alpha\sum_{ij}T^\alpha_\beta(e^{ij}_\alpha)\ot f^{ij}_\alpha(\alpha')$.
This implies that
$[(\i\tilde\ot T')(T\tilde\ot\i)(f)](\beta,\beta')=\sum_\alpha\sum_{ij}T^\alpha_\beta(e^{ij}_\alpha)\ot [T'(f^{ij}_\alpha)](\beta')$.
On the other hand $[(\i\tilde\ot T')(f)](\alpha,\beta')=\sum_{ij}e^{ij}_\alpha\ot [T'(f^{ij}_\alpha)](\beta')$ and hence
\begin{align*}
[(T\tilde\ot\i)(\i\tilde\ot T')(f)](\beta,\beta')&=
\sum_{\alpha}(T^\alpha_\beta\ot\i_{\beta'})(\sum_{ij}e^{ij}_\alpha\ot [T'(f^{ij}_\alpha)](\beta'))\\
&=\sum_\alpha\sum_{ij}T^\alpha_\beta(e^{ij}_\alpha)\ot [T'(f^{ij}_\alpha)](\beta').
\end{align*}
\end{proof}
\begin{enumerate}
\item[(\textbf{P5})] \emph{Suppose that $T:\b{F}(I)\to\b{F}(J)$ is a *-homomorphism. If $f$ belongs to
$\b{F}_{\r{b}:2}(I\times J')$ then $(T\tilde\ot\i)(f)\in\b{F}_{\r{b}:2}(J\times J')$.}
\end{enumerate}
\begin{proof}
Let $f\in\b{F}_{\r{b}:2}(I\times J')$. So for every $\alpha\in I$ we have $\sup_{\beta'\in J'}\|f(\alpha,\beta')\|<\infty$.
Let $\beta\in J$ be fixed. By (\textbf{P3}), $[(T\tilde\ot\i)(f)](\beta,\beta')=\sum_{\alpha}(T^{\alpha}_{\beta}\ot\i_{\beta'})(f(\alpha,\beta'))$.
So $\sup_{\beta'\in J'}\|[(T\tilde\ot\i)(f)](\beta,\beta')\|
\leq\sum_{\alpha\in I,T^\alpha_\beta\neq0}(\sup_{\beta'\in J'}\|f(\alpha,\beta')\|)<\infty$.
\end{proof}
\section{Discrete quantum semigroups}
Let $I$ be an index system. A comultiplication for $I$ is a collection of
*-homomorphisms $\Delta^\alpha_{\beta,\gamma}:\c{M}_\alpha\to \c{M}_\beta\ot\c{M}_\gamma$ for each ordered triple
$(\alpha,\beta,\gamma)$ of elements of $I$, which satisfies the two conditions below.
\begin{enumerate}
\item[(i)] $\Delta^\alpha_{\beta,\gamma}(1_\alpha)\Delta^{\alpha'}_{\beta,\gamma}(1_{\alpha'})=0$ for $\alpha\neq\alpha'$, and
\item[(ii)] the *-homomorphisms
$\sum_{\omega}(\Delta^\omega_{\alpha,\beta}\ot\i)\Delta^\lambda_{\omega,\gamma}$ and
$\sum_{\omega}(\i\ot\Delta^\omega_{\beta,\gamma})\Delta^\lambda_{\alpha,\omega}$ from $\c{M}_\lambda$ to
$\c{M}_\alpha\ot\c{M}_\beta\ot\c{M}_\gamma$ are equal.
\end{enumerate}
Note that (i) implies that for fixed $\beta$ and $\gamma$ there are only finitely many $\alpha$ with $\Delta^\alpha_{\beta,\gamma}(1_\alpha)\neq0$.
Also (i) guarantees that the *-homomorphisms in (ii) are well defined. Now, we may, and hence do, define a *-homomorphism $\Delta$ by
$$[\Delta(f)](\beta,\gamma)=\sum_{\alpha}\Delta^\alpha_{\beta,\gamma}f(\alpha),$$
from $\b{F}(I)$ to $\b{F}(I\times I)$. Then (ii) may be restated as
$(\Delta\tilde\ot\i)\Delta=(\i\tilde\ot\Delta)\Delta$. Note that every $\Delta^\alpha_{\beta,\gamma}$ may be recovered from $\Delta$.
So, from now on we do not distinguish between $\Delta$ and the collection $\{\Delta^\alpha_{\beta,\gamma}\}$.
\begin{definition}
A discrete quantum semigroup is a pair $\m{G}=(I,\Delta)$ such that $I$ is an index system and $\Delta$ is a comultiplication for $I$.
\end{definition}
For a discrete quantum semigroup $\m{G}=(I,\Delta)$ we denote the algebras $\b{F}_\r{b}(I)$ and $\b{F}(I)$, respectively,
by $\b{F}_\r{b}(\m{G})$ and $\b{F}(\m{G})$. Analogously, we let $\b{F}(\m{G}\times\m{G})=\b{F}(I\times I)$ and
$\b{F}_\r{b}(\m{G}\times\m{G})=\b{F}_\r{b}(I\times I)$. The comultiplication $\Delta$ of $\m{G}$ transforms bounded functions
to bounded functions i.e. $\Delta(\b{F}_\r{b}(\m{G}))\subseteq\b{F}_\r{b}(\m{G}\times\m{G})$. It follows from the fact that
the map $f\mapsto[\Delta(f)](\beta,\gamma)$ from $\b{F}_\r{b}(\m{G})$ to $\c{M}_\beta\ot\c{M}_\gamma$ is a *-homomorphism between
C*-algebras and hence norm decreasing.

Let $\m{G}=(I,\Delta)$ be a discrete quantum semigroup. Then $\m{G}$ is called right amenable \cite{BedosMurphyTuset1} if there is a
state $\f{m}$ on $\b{F}_\r{b}(\m{G})$, called right invariant mean, which satisfies $(\f{m}\tilde\ot\i)\Delta(f)=\f{m}(f)1$
for every $f\in\b{F}_\r{b}(\m{G})$. Left invariant means and left amenable discrete quantum semigroups are defined similarly.
\begin{example}
Let $G$ be a discrete semigroup. Then $G$ gives rise to a discrete quantum semigroup $\m{G}=(I,\Delta)$
in which $I=G$ and $n_I=1$. The *-homomorphisms $\Delta^\alpha_{\beta,\gamma}:\m{C}\to\m{C}\ot\m{C}=\m{C}$ are defined by
$$\Delta^\alpha_{\beta,\gamma}=\begin{cases}
\i & \alpha=\beta\gamma\\
0& \text{otherwise}
\end{cases}$$
In this case, $\m{G}$ is right (resp. left) amenable iff $G$ is right (resp. left) amenable as usual. Also, it is not so hard to see
that every discrete quantum semigroup $\m{G}=(I,\Delta)$ for which $n_I=1$, is constructed from a discrete semigroup, as above.
\end{example}
Discrete quantum groups \cite{Daele2} which are Pontryagin dual of compact quantum groups \cite{MaesDaele1}
(or Hopf-von Neumann algebras of discrete type \cite{EnockSchwartz1}) are also discrete quantum semigroups in our sense.
We will need the next lemmas in Section 4.
\begin{lemma}
Let $\m{G}=(I,\Delta)$ be a discrete quantum semigroup and $\f{m}$ be a right invariant mean for $\m{G}$. Then for every
$f\in\b{F}_{\r{b}:1}(\m{G}\times\m{G})$ the following holds.
$$(\f{m}\tilde\ot\i\tilde\ot\i)(\Delta\tilde\ot\i)(f)=1\ot[(\f{m}\tilde\ot\i)(f)].$$
\end{lemma}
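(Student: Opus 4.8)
The plan is to reduce everything to the pointwise description of $\tilde\ot$ provided by property (\textbf{P3}) and the defining identity of a right invariant mean. First I would fix $\beta,\gamma\in I$ and evaluate both sides of the claimed identity at the point $(\beta,\gamma)$, aiming to show equality in $\c{M}_\beta\ot\c{M}_\gamma$ (recall that since $\f{m}$ is a functional, the image of $\f{m}\tilde\ot\i\tilde\ot\i$ canonically lives in $\b{F}(\m{G}\times\m{G})$, as remarked after formula (\ref{E1})). For the left-hand side, I would first apply $\Delta\tilde\ot\i$ to $f\in\b{F}_{\r{b}:1}(\m{G}\times\m{G})$; here (\textbf{P5}) — or rather the observation that fixing the last variable reduces $\Delta\tilde\ot\i$ to $\Delta\ot\i_\gamma$ acting on the $\b{F}_\r{b}(\m{G})\ot\c{M}_\gamma$-valued function $\alpha\mapsto f(\alpha,\gamma)$ — guarantees that $(\Delta\tilde\ot\i)(f)$ lies in $\b{F}_{\r{b}:1}(\m{G}\times\m{G}\times\m{G})$ after a routine check, so that $\f{m}\tilde\ot\i\tilde\ot\i$ may legitimately be applied to it. Expanding $f$ in the standard basis of $\c{M}_\gamma$, write $f(\alpha,\gamma)=\sum_{ij}f^{ij}_\gamma(\alpha)\ot e^{ij}_\gamma$ with each $f^{ij}_\gamma\in\b{F}_\r{b}(\m{G})$, so the whole computation is carried out one matrix coefficient at a time.

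Next I would compute the two sides coefficientwise. On the right, $[(\f{m}\tilde\ot\i)(f)](\gamma)=\sum_{ij}\f{m}(f^{ij}_\gamma)\,e^{ij}_\gamma$ by (\ref{E1}), so the right-hand side evaluated at $(\beta,\gamma)$ is $1_\beta\ot\sum_{ij}\f{m}(f^{ij}_\gamma)\,e^{ij}_\gamma$. On the left, applying $\Delta\tilde\ot\i$ in its coefficient form and then $\f{m}\tilde\ot\i\tilde\ot\i$, I expect to land on $\sum_{ij}\bigl[(\f{m}\tilde\ot\i)(\Delta(f^{ij}_\gamma))\bigr](\beta)\ot e^{ij}_\gamma$, i.e. the outer tensor leg $e^{ij}_\gamma$ is inert throughout and only $\f{m}$, $\i$, $\Delta$ act on the first two legs. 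At this point the right-invariance hypothesis $(\f{m}\tilde\ot\i)\Delta(g)=\f{m}(g)1$ applied to $g=f^{ij}_\gamma\in\b{F}_\r{b}(\m{G})$ collapses each summand to $\f{m}(f^{ij}_\gamma)\,1_\beta$, and the two sides agree. To make the bookkeeping of which legs $\tilde\ot$ acts on rigorous I would lean on the associativity (\textbf{P1}) and the commutation property (\textbf{P4}) — writing $\Delta\tilde\ot\i$ on $\b{F}(\m{G}\times\m{G}\times\m{G})$ as $\i\tilde\ot\i$ composed appropriately — rather than re-deriving formulas from (\ref{E1}) by hand.

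The main obstacle I anticipate is purely one of bookkeeping: verifying that every intermediate function stays in the correct $\b{F}_{\r{b}:\cdot}$-subspace so that each application of $\f{m}\tilde\ot(\cdots)$ is defined, and keeping straight the order in which $\f{m}$ and the two $\i$'s act when the domain has three legs. The clean way around this is to do the whole argument after evaluating at a fixed $\gamma$ in the last coordinate first — which immediately turns $\Delta\tilde\ot\i$ into the genuine algebraic tensor $\Delta\ot\i_\gamma$ on $\b{F}_\r{b}(\m{G})\ot\c{M}_\gamma$ and reduces the statement, coefficient by coefficient, to the scalar identity $(\f{m}\tilde\ot\i)\Delta(g)=\f{m}(g)1$ that is simply the definition of a right invariant mean. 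Once the reduction is set up this way, no genuine calculation remains.
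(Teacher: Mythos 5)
Your proposal is correct and follows essentially the same route as the paper's proof: expand $f(\alpha,\gamma)=\sum_{ij}f^{ij}_\gamma(\alpha)\ot e^{ij}_\gamma$ in the matrix units of the last leg, regroup $\f{m}\tilde\ot\i\tilde\ot\i$ as $(\f{m}\tilde\ot\i)\tilde\ot\i$ via (\textbf{P1}), and collapse each coefficient with the right-invariance identity $(\f{m}\tilde\ot\i)\Delta(f^{ij}_\gamma)=\f{m}(f^{ij}_\gamma)1$. The only cosmetic difference is in the bookkeeping citations (the paper invokes (\textbf{P2}) for well-definedness rather than (\textbf{P5})/(\textbf{P4}), and your boundedness check for $(\Delta\tilde\ot\i)(f)$ via boundedness of each $\Delta(f^{ij}_\gamma)$ is the right argument), so no substantive gap remains.
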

\begin{proof}
First of all, note that by (\textbf{P2}) the right hand side is well-defined. Let $f$ be in
$\b{F}_{\r{b}:1}(\m{G}\times\m{G})$ and let $f^{ij}_\gamma\in\b{F}_{\r{b}}(\m{G})$ be such that
$f(\omega,\gamma)=\sum_{ij}f^{ij}_\gamma(\omega)\ot e^{ij}_\gamma$. Then we get
$[(\Delta\tilde\ot\i)(f)](\alpha,\beta,\gamma)=\sum_{ij}[\Delta(f^{ij}_\gamma)](\alpha,\beta)\ot e^{ij}_\gamma$ and hence
\begin{align*}
[(\f{m}\tilde\ot\i\tilde\ot\i)(\Delta\tilde\ot\i)(f)](\beta,\gamma)&=[((\f{m}\tilde\ot\i)\tilde\ot\i)(\Delta\tilde\ot\i)(f)](\beta,\gamma)\\
&=\sum_{ij}[(\f{m}\tilde\ot\i)\Delta(f^{ij}_\gamma)](\beta)\ot e^{ij}_\gamma\\
&=\sum_{ij}\f{m}(f^{ij}_\gamma)1_\beta\ot e^{ij}_\gamma\\
&=\sum_{ij}1_\beta\ot \f{m}(f^{ij}_\gamma)e^{ij}_\gamma\\
&=1_\beta\ot[(\f{m}\tilde\ot\i)(f)](\gamma)\\
&=(1\ot[(\f{m}\tilde\ot\i)(f)])(\beta,\gamma).
\end{align*}
\end{proof}
\begin{lemma}
Let $\m{G}=(I,\Delta)$ be a discrete quantum semigroup, $\f{n}$ be a linear functional on $\b{F}_\r{b}(\m{G})$, and
$f\in\b{F}_{\r{b}:1}(\m{G}\times\m{G})$. Then
$$\Delta(\f{n}\tilde\ot\i)(f)=(\f{n}\tilde\ot\i\tilde\ot\i)(\i\tilde\ot\Delta)(f).$$
\end{lemma}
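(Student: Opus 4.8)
The plan is to establish the identity by evaluating both sides at an arbitrary pair $(\beta,\gamma')$ of indices after expanding $f$ in matrix units, exactly in the spirit of the preceding lemma. First I would write $f(\omega,\gamma)=\sum_{ij}f^{ij}_\gamma(\omega)\ot e^{ij}_\gamma$ with $f^{ij}_\gamma\in\b{F}_\r{b}(\m{G})$, where $\{e^{ij}_\gamma\}$ is the standard basis of $\c{M}_\gamma$; this is legitimate because $f\in\b{F}_{\r{b}:1}(\m{G}\times\m{G})$ and each $\c{M}_\gamma$ is finite dimensional. Since $\f{n}$ is a linear functional, (\ref{E1}) together with the remark following it identifies $(\f{n}\tilde\ot\i)(f)$ with the member $\gamma\mapsto\sum_{ij}\f{n}(f^{ij}_\gamma)e^{ij}_\gamma$ of $\b{F}(\m{G})$. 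Feeding this into the defining formula $[\Delta(g)](\beta,\gamma')=\sum_\gamma\Delta^\gamma_{\beta,\gamma'}(g(\gamma))$ of the comultiplication then gives
\[ [\Delta(\f{n}\tilde\ot\i)(f)](\beta,\gamma')=\sum_\gamma\sum_{ij}\f{n}(f^{ij}_\gamma)\,\Delta^\gamma_{\beta,\gamma'}(e^{ij}_\gamma), \]
the value of the left-hand side at $(\beta,\gamma')$.

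For the right-hand side I would first apply (\textbf{P3}), which is available since $\Delta$ is a *-homomorphism, to get $[(\i\tilde\ot\Delta)(f)](\omega,\beta,\gamma')=\sum_\gamma(\i_\omega\ot\Delta^\gamma_{\beta,\gamma'})(f(\omega,\gamma))$, and then substitute the matrix-unit expansion of $f$, which turns this into $\sum_\gamma\sum_{ij}f^{ij}_\gamma(\omega)\ot\Delta^\gamma_{\beta,\gamma'}(e^{ij}_\gamma)$. Before the outer $\f{n}\tilde\ot\i\tilde\ot\i$ may be applied I must check that $(\i\tilde\ot\Delta)(f)$ belongs to $\b{F}_{\r{b}:1}(\m{G}\times\m{G}\times\m{G})$: for fixed $(\beta,\gamma')$, condition (i) in the definition of a comultiplication leaves only finitely many nonzero summands over $\gamma$, and each $\i_\omega\ot\Delta^\gamma_{\beta,\gamma'}$ is a *-homomorphism between C*-algebras, hence norm-decreasing, so $\sup_\omega\|[(\i\tilde\ot\Delta)(f)](\omega,\beta,\gamma')\|\leq\sum_\gamma\sup_\omega\|f(\omega,\gamma)\|<\infty$. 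Viewing $\omega\mapsto[(\i\tilde\ot\Delta)(f)](\omega,\beta,\gamma')$ as the element $\sum_\gamma\sum_{ij}f^{ij}_\gamma\ot\Delta^\gamma_{\beta,\gamma'}(e^{ij}_\gamma)$ of $\b{F}_\r{b}(\m{G})\ot\c{M}_\beta\ot\c{M}_{\gamma'}$ and applying $\f{n}\ot\i_\beta\ot\i_{\gamma'}$ as prescribed by (\ref{E1}) then yields $[(\f{n}\tilde\ot\i\tilde\ot\i)(\i\tilde\ot\Delta)(f)](\beta,\gamma')=\sum_\gamma\sum_{ij}\f{n}(f^{ij}_\gamma)\,\Delta^\gamma_{\beta,\gamma'}(e^{ij}_\gamma)$, which is exactly the expression found for the left-hand side; as $(\beta,\gamma')$ is arbitrary, the two functions coincide.

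The hard part is not any one calculation but the bookkeeping of three tensor legs: one must keep straight that $\f{n}$ acts on the first leg while $\Delta$ acts on the second, reconcile the two matrix-unit decompositions involved, and verify at each stage that the intermediate element lies in the domain on which the relevant $\tilde\ot$-map is defined — which is precisely where the hypothesis $f\in\b{F}_{\r{b}:1}(\m{G}\times\m{G})$ and the norm-decreasing property of the $\Delta^\gamma_{\beta,\gamma'}$ are used. Conceptually the statement just says that ``applying the functional $\f{n}$ to one leg'' commutes with ``applying $\Delta$ to another leg'', in the same spirit as (\textbf{P4}); the reason it is not a literal instance of (\textbf{P4}) is that $\f{n}$ is defined on $\b{F}_\r{b}(\m{G})$ rather than on all of $\b{F}(\m{G})$, which is exactly why the domain is taken to be $\b{F}_{\r{b}:1}(\m{G}\times\m{G})$ instead of all of $\b{F}(\m{G}\times\m{G})$.
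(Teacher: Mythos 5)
Your proof is correct, but it takes a genuinely different route from the paper's. The paper disposes of the lemma in two lines: it chooses an arbitrary linear extension $\bar{\f{n}}$ of $\f{n}$ to all of $\b{F}(\m{G})$, invokes (\textbf{P4}) (applicable because $\Delta$ is a *-homomorphism) to commute $\bar{\f{n}}\tilde\ot\i$ past $\i\tilde\ot\Delta$, and then notes that $\bar{\f{n}}\tilde\ot\i$ and $\f{n}\tilde\ot\i$ agree on $\b{F}_{\r{b}:1}(\m{G}\times\m{G})$ because each slice $\omega\mapsto f(\omega,\gamma)$ is bounded; well-definedness of the right-hand side is simply delegated to the analogue of (\textbf{P5}). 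You instead evaluate both sides entrywise at $(\beta,\gamma')$ after a matrix-unit expansion, in the style of the paper's proof of Lemma 6, and you re-derive by hand exactly the instance of (\textbf{P5}) that is needed (finitely many $\gamma$ contribute by condition (i), and each $\i_\omega\ot\Delta^\gamma_{\beta,\gamma'}$ is norm-decreasing). Your closing observation --- that the statement is morally an instance of (\textbf{P4}) but not literally one, since $\f{n}$ is defined only on $\b{F}_\r{b}(\m{G})$ --- is precisely the obstruction the paper's extension trick is designed to circumvent. The trade-off: the paper's argument is shorter and reuses the Section 2 toolbox, at the price of an arbitrary (choice-dependent) extension of $\f{n}$; yours is longer but self-contained, completely explicit about the domain and boundedness issues, and never needs to extend $\f{n}$.
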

\begin{proof}
First of all, note that by (\textbf{P5}) the right hand side is well-defined. Let $\bar{\f{n}}$ be an arbitrary linear extension of $\f{n}$
to $\b{F}(\m{G})$. Then
\begin{align*}
(\f{n}\tilde\ot\i\tilde\ot\i)(\i\tilde\ot\Delta)(f)&=(\bar{\f{n}}\tilde\ot\i\tilde\ot\i)(\i\tilde\ot\Delta)(f)\\
&=\Delta(\bar{\f{n}}\tilde\ot\i)(f)=\Delta(\f{n}\tilde\ot\i)(f),
\end{align*}
where we have used (\textbf{P4}) to pass from the first equality to the second one.
\end{proof}
\section{The proof of Theorem 2}
Suppose that $\m{G}$ is right amenable. The proof of the other case is similar. Let $\f{m}$ be a right invariant mean for $\m{G}$ and
let $r>0$ be given. We show that the conditions of Definition 1 are satisfied for $r'=r$. Let $f\in\b{F}(\m{G})$ be such that
$$\|\Delta(f)-1\ot f-f\ot 1\|<r,$$
that is $\sup_{\beta,\gamma}\|[\Delta(f)](\beta,\gamma)-f(\beta)\ot1_\gamma-1_\beta\ot f(\gamma)\|<r$. It follows that
$$\sup_{\beta}\|[\Delta(f)](\beta,\gamma)-f(\beta)\ot1_\gamma\|<r+\|1_\beta\ot f(\gamma)\|=r+\|f(\gamma)\|.$$
So $(\Delta(f)-f\ot 1)\in\b{F}_{\r{b}:1}(\m{G}\times\m{G})$. We define a function $F$ in $\b{F}(\m{G})$ by
$$F=(\f{m}\tilde\ot\i)(\Delta f-f\ot1).$$
By (\textbf{P0}) and (\textbf{P1}) we get
\begin{equation}\label{EA1}
F\ot1=[(\f{m}\tilde\ot\i)(\Delta(f)-f\ot1)]\ot\i(1)\\
=(\f{m}\tilde\ot\i\tilde\ot\i)(\Delta (f)\ot1-f\ot1\ot1).
\end{equation}
It follows from Lemma 6 that $1\ot F=(\f{m}\tilde\ot\i\tilde\ot\i)(\Delta\tilde\ot\i)(\Delta(f)-f\ot1)$ and hence
by (\textbf{P2}) we get
\begin{equation}\label{EA2}
1\ot F=(\f{m}\tilde\ot\i\tilde\ot\i)((\Delta\tilde\ot\i)\Delta(f)-\Delta(f)\ot1).
\end{equation}
It follows from (\ref{EA1}) and (\ref{EA2}) that
\begin{align*}
F\ot1+1\ot F&=(\f{m}\tilde\ot\i\tilde\ot\i)((\Delta\tilde\ot\i)\Delta(f)-f\ot1\ot1)\\
&=(\f{m}\tilde\ot\i\tilde\ot\i)((\i\tilde\ot\Delta)\Delta(f)-f\ot1\ot1)\\
&=(\f{m}\tilde\ot\i\tilde\ot\i)(\i\tilde\ot\Delta)(\Delta(f)-f\ot1),
\end{align*}
where we have used (\textbf{P5}) to pass from the second row to the third one.
By Lemma 7 the third row is equal to $\Delta(\f{m}\tilde\ot\i)(\Delta f-f\ot1)=\Delta(F)$. So
we shaw that $\Delta(F)=1\ot F+F\ot1$. For the norm inequality we have
\begin{align*}
\|F-f\|&=\|(\f{m}\tilde\ot\i)(\Delta(f)-f\ot1)-f\|\\
&=\|(\f{m}\tilde\ot\i)(\Delta(f)-f\ot1)-(\f{m}\tilde\ot\i)(1\ot f)\|\\
&=\|(\f{m}\tilde\ot\i)(\Delta(f)-f\ot1-1\ot f)\|\\
&\leq\|(\f{m}\tilde\ot\i)\|\|\Delta(f)-f\ot1-1\ot f\|<r.
\end{align*}
This completes the proof.
\bibliographystyle{amsplain}

\end{document}